\def\eps{\varepsilon}
\def\d{\,{\rm d}}
\def\div{{\rm div}}
\def\RR{\mathbb{R}}
\newtheorem{theorem}{Theorem}[section]
\newtheorem{lemma}[theorem]{Lemma}
 \newtheorem{definition}[theorem]{Definition}
\newtheorem{assumption}[theorem]{Assumption}
\newtheorem{remark}[theorem]{Remark}
\begin{document}
\title[Identification of diffusion and absorption coefficients]{Simultaneous identification of diffusion and absorption coefficients in a quasilinear elliptic problem}
\author[H. Egger]{Herbert Egger$^\dag$}
\author[J.-F. Pietschmann]{Jan-Frederik Pietschmann$^\dag$}
\author[M. Schlottbom]{Matthias Schlottbom$^\dag$}
\thanks{$^\dag$Numerical Analysis and Scientific Computing, Department of Mathematics, TU Darmstadt, Dolivostr. 15, 64293 Darmstadt. \\
Email: {\tt $\{$egger,pietschmann,schlottbom$\}$@mathematik.tu-darmstadt.de}}

\begin{abstract}
  In this work we consider the identifiability of two coefficients $a(u)$ and $c(x)$ in a quasilinear elliptic partial differential equation 
  from observation of the Dirichlet-to-Neumann map. We use a linearization procedure due to Isakov \cite{Isakov93} and special singular solutions to first determine $a(0)$ and $c(x)$ for $x \in \Omega$. Based on this partial result, we are then able to determine $a(u)$ for $u \in \RR$ by an adjoint approach.
\end{abstract}

\maketitle

{\footnotesize
{\noindent \bf Keywords:} 
inverse problems, 
parameter identification,
uniqueness, 
quasilinear elliptic equation, 
simultaneous identification
}


{\footnotesize
\noindent {\bf AMS Subject Classification:}  
35R30, 35J62
}

\section{Introduction}
We consider the simultaneous identification of two unknown coefficients $a=a(u)$ and $c=c(x)$ in the quasilinear elliptic problem
\begin{eqnarray}
  -\div(a(u)\nabla u) + c u & = & 0 \quad \text{in } \Omega, \label{eq:quasi}\\
			  u & = & g \quad \text{on }\partial \Omega, \label{eq:quasi_bc}
\end{eqnarray}
where $\Omega\subset\RR^n$, $n\in\{2,3\}$, is a bounded, sufficiently regular domain. 
We assume to have access to the \emph{Dirichlet-to-Neumann} map, given by
\begin{align*}
  \Lambda_{a,c}: \ g \mapsto a(u) \partial_n u,
\end{align*}
with $u$ denoting the solution to \eqref{eq:quasi}--\eqref{eq:quasi_bc} with Dirichlet boundary datum $g$. 
The main contribution of our manuscript is the following
\begin{theorem}\label{thm:main}
Let Assumption~\ref{ass:1} hold and assume $\Lambda_{a_1,c_1}=\Lambda_{a_2,c_2}$. Then $a_1(u) = a_2(u)$ and $c_1(x) = c_2(x)$ for all $u\in\RR$ and a.e. $x\in\Omega$. 
\end{theorem}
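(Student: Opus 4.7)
The plan would follow the three-step strategy indicated in the abstract.

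\emph{Step 1: Linearization at $g = 0$.} I would fix a boundary datum $g_0$ and consider the one-parameter family $g_t = t g_0$ with corresponding solutions $u_{i,t}$ of \eqref{eq:quasi}--\eqref{eq:quasi_bc} for the coefficients $(a_i, c_i)$. Standard continuous-dependence and implicit-function arguments show that $u_{i,t}\to 0$ as $t\to 0$ and that $v_i := \partial_t u_{i,t}|_{t=0}$ solves the linear problem
\[
  -\div(a_i(0)\nabla v_i) + c_i v_i = 0 \text{ in }\Omega, \qquad v_i=g_0 \text{ on }\partial\Omega.
\]
Differentiating the identity $\Lambda_{a_1,c_1}(tg_0)=\Lambda_{a_2,c_2}(tg_0)$ at $t=0$ yields $a_1(0)\partial_n v_1 = a_2(0)\partial_n v_2$ on $\partial\Omega$ for every $g_0$, so the Dirichlet-to-Neumann maps of the two \emph{linear} problems coincide.

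\emph{Step 2: Recovery of $a(0)$ and $c$.} Because the coefficient $a_i(0)$ is a scalar, the linearized equation is, up to the multiplicative constant $a_i(0)$, a Schrödinger equation $-\Delta v_i + q_i v_i = 0$ with $q_i := c_i/a_i(0)$, and the equality of linear DtN maps reads $a_1(0)\Lambda^{\mathrm{Sch}}_{q_1} = a_2(0)\Lambda^{\mathrm{Sch}}_{q_2}$. The high-frequency (boundary) asymptotics of the Schrödinger DtN behave like $\sqrt{-\Delta_{\partial\Omega}}$ independently of the potential, which identifies the constant $a_1(0)=a_2(0)$; it then follows that $\Lambda^{\mathrm{Sch}}_{q_1} = \Lambda^{\mathrm{Sch}}_{q_2}$, and the classical Sylvester--Uhlmann/Alessandrini arguments based on singular (or complex geometric-optics) solutions give $q_1\equiv q_2$, hence $c_1\equiv c_2$ a.e.\ in $\Omega$.

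\emph{Step 3: Adjoint argument for $a(u)$.} With $c := c_1 = c_2$ and $\alpha := a_1(0) = a_2(0)$ known, I would pass to the Kirchhoff transform $A_i(s) := \int_0^s a_i(\tau)\,\d\tau$, under which \eqref{eq:quasi} becomes $-\Delta A_i(u_i) + c u_i = 0$. For a common Dirichlet datum $g$ with corresponding solutions $u_1, u_2$, the equality of DtN maps yields $u_1 = u_2 = g$ and $\partial_n A_1(u_1) = \partial_n A_2(u_2)$ on $\partial\Omega$. Setting $w := A_1(u_1) - A_2(u_2) = (A_1 - A_2)(u_1) + a_2(\xi)(u_1 - u_2)$ (by the mean value theorem) and testing Green's identity against adjoint states $\phi$ solving prescribed Poisson problems, one obtains an integral identity of the schematic form
\[
  \int_\Omega (A_1 - A_2)(u_1)\,\psi\,\d x \; = \; \int_\Omega K(x;u_1,u_2)\,(u_1-u_2)\,\d x \; + \; B(g),
\]
with arbitrary test source $\psi$, known kernel $K$, and boundary contribution $B(g)$ involving only the already-identified data. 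Varying $g$ and invoking the maximum principle and continuous dependence to steer $u_1$ through any prescribed value $u_0 \in \RR$ at some point of $\Omega$, a density argument in $\psi$ would force $A_1 \equiv A_2$, hence $a_1 \equiv a_2$.

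\emph{Main obstacle.} Steps 1 and 2 essentially follow a well-trodden path once Isakov's linearization device is set up. The real difficulty lies in Step 3: the integral identity couples the unknown $(A_1 - A_2)(u_1)$ to the a priori unknown difference $u_1 - u_2$, and disentangling them requires both a clever choice of test sources $\psi$ and a maximum-principle/continuity argument ensuring that the range of $u_1$ over $\Omega$ can be made to cover every $u_0 \in \RR$ as $g$ varies. This is where I expect the bulk of the technical work to lie.
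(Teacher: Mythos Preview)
Your Steps~1 and~2 are on the right track and broadly parallel the paper, though your mechanism for extracting $a(0)$ differs: the paper does not use high-frequency/symbol asymptotics of the linear DtN (which would need a smoother boundary than the piecewise $C^1$ of Assumption~\ref{ass:1}), but instead inserts Alessandrini-type singular solutions $v_i=\Phi_y+w_i$ into the orthogonality relation \eqref{eq:orth_linear} and lets $y\to\partial\Omega$, so that $\int_\Omega\nabla v_1\nabla v_2$ blows up while $\int_\Omega v_1v_2$ stays bounded. Your approach would work under extra boundary regularity, and both routes then feed into the same classical Schr\"odinger uniqueness for $c$.

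The genuine gap is in Step~3. Your integral identity keeps an interior term $\int_\Omega (A_1-A_2)(u_1)\,\psi\,\d x$ with a free source $\psi$ and a second interior term involving the unknown $u_1-u_2$; you then hope a density argument in $\psi$ together with ``steering $u_1$'' will decouple them. The paper avoids this coupling altogether by the specific choice $\psi\equiv 0$, i.e.\ by testing only with \emph{harmonic} $\lambda$. Then the interior $(A_1-A_2)$-contribution disappears and one is left with
\[
\int_\Omega c\,(u_1-u_2)\,\lambda\,\d x\;+\;\int_{\partial\Omega}\big(A_1(g)-A_2(g)\big)\,\partial_n\lambda\,\d s\;=\;0,
\]
where the second integral depends only on the prescribed boundary datum $g$, not on the interior solutions. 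The argument is then a blow-up/contradiction in the spirit of Step~2: choose a boundary datum $g$ that is constant $=\bar g$ on a small cap near $\bar x\in\partial\Omega$, take $\lambda^\eps = n(\bar x)\cdot\nabla\Phi_{\bar x+\eps n(\bar x)}$, and let $\eps\to 0$ with the cap radius tuned to $\eps$. The boundary integral diverges (with sign governed by $A_1(\bar g)-A_2(\bar g)$) while $\int_\Omega c(u_1-u_2)\lambda^\eps$ stays bounded because $\lambda^\eps$ is uniformly in $L^1(\Omega)$. No control of interior values of $u_1$ and no density-in-$\psi$ argument are needed; the whole point is that harmonicity of the test function pushes the unknown $(A_1-A_2)(u)$ to the boundary where $u=g$ is known. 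Your proposed route, as written, does not achieve this decoupling, and it is not clear how the pair of interior terms could be separated without it.
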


Let us put this result into perspective: 
Much of the work about identification of unknown coefficients in elliptic and parabolic partial differential equations goes back to the seminal paper of Calder{\'o}n \cite{Calderon80}. 
There,  $c\equiv0$ and the goal is to reconstruct an unknown spatially varying conductivity $a=a(x)$ from observation of the Dirichlet-to-Neumann map. The Calder{\'o}n problem has been studied intensively by many authors, e.g.,  \cite{Astala06,Druskin82,KohnVogelius84,Kohn85,Nachmann96,SylvesterUhlmann87}. 
Indeed, several new technical tools have been developed with this application in mind. 
For a comprehensive review, we refer the reader to \cite{Uhlmann2009}. 
While the question of identifiability of one spatially varying coefficient can be answered  affirmatively under rather general assumptions, the simultaneous determination of two coefficients $a=a(x)$ and $c=c(x)$ is,
in general, not possible, see \cite{ArrLio98}. If $c$ has non-vanishing imaginary part, however, \cite{Grinberg00} provides a local uniqueness result. More recently \cite{Harrach09}, the unique determination of two parameters $a=a(x)$ and $c=c(x)$ was established in the class of piecewise constant and piecewise analytic coefficients, respectively.
Semilinear elliptic equations with $a\equiv1$ and $c=c(x,u)$ have been considered in \cite{IsakovSylvester94}; the case $c=c(u,\nabla u)$ is treated in \cite{Isakov01}.
For quasilinear elliptic equations Sun \cite{Sun96} proved uniqueness of a scalar coefficient $a=a(x,u)$ assuming that $c \equiv 0$. 
In \cite{SunUhlmann97}, this result was generalized to positive definite symmetric matrices $a=a(x,u)\in\RR^{n\times n}$, $n\geq 2$.
Recently, many authors considered the question of uniqueness employing only partial data on the boundary, cf. \cite{Harrach09,ImanuvilovUhlmannYamamoto10,ImanuvilovUhlmannYamamoto11,Isakov01}.
Let us also mention the Bukhgeim-Klibanov method of Carleman estimates introduced in \cite{BughkeimKlibanov81} to prove global uniqueness results for various types of differential equations even in case of non-overdetermined data or single measurements, see \cite{Klibanov13} for a review of this method and a comprehensive list of applications.
Besides uniqueness, also stability issues have been considered in the literature. In this context, let us refer to the work of Alessandrini~\cite{Alessandrini90} and also to \cite{Klibanov13,KlibanovTimonov04}.
Uniqueness results for other types of problems, e.g., of parabolic type or in nonlinear elasticity can be found in \cite{CannonYin89,DuChateauRundell85,DuChateau04,Isakov89,Isakov93,Klibanov04,PilantRundell90} and \cite{KangNakamura02,NakamuraSun94}.
A broad overview over inverse problems for partial differential equations and many more results and references can be found in the book of Isakov \cite{Isakov06}.

The rest of the paper, which is devoted to the proof of Theorem~\ref{thm:main}, is organized as follows:
In Section~\ref{sec:DtN}, we prove well-posedness of \eqref{eq:quasi}--\eqref{eq:quasi_bc},
and we rigorously define the Dirichlet-to-Neumann map.
In Section~\ref{sec:identify_a0}, we first utilize a linearization procedure and show by contradiction that $a(0)$ is uniquely determined by the Dirichlet-to-Neumann map. 
Using the knowledge of $a(0)$, we then obtain the identifiability of $c(x)$ by well-known results for the linearized problem.
The identifiability of $a(u)$ for $u \ne 0$ is established in Section~\ref{sec:identify_a}, and we conclude with a short discussion about possible extensions of our results.

\section{Preliminaries}\label{sec:DtN}

Throughout the rest of the paper, we make the following assumption on the regularity of the domain and the coefficients.

\begin{assumption}\label{ass:1}
  $\Omega\subset\RR^n$ is a bounded domain in two or three space dimensions and $\partial\Omega$ is piecewise $C^1$. Furthermore, we assume that $a\in C^0(\RR)$ and $c\in L^\infty(\Omega)$ such that
\begin{align*}
      \alpha \leq a(u) \leq \frac{1}{\alpha}\quad \text{for all } u\in\RR\quad\text{and}\quad 0\leq c(x) \leq \frac{1}{\alpha}\quad \text{a.e. in } \Omega
\end{align*}
for some constant $\alpha>0$. 
\end{assumption}

We denote by $H^1(\Omega)$ the usual Sobolev space of square integrable functions with square integrable weak derivatives. Functions $u\in H^1(\Omega)$ have well-defined traces $\left. u\right|_{\partial\Omega}$ and we denote by $H^{1/2}(\partial\Omega)$ the space of traces of functions in $H^1(\Omega)$ with norm
\begin{align*}
  \|g\|_{H^{1/2}(\partial\Omega)} = \inf_{u\in H^1(\Omega); \left.u\right|_{\partial \Omega}=g}\|u\|_{H^1(\Omega)}.
\end{align*}
The topological dual space of $H^{1/2}(\partial\Omega)$ is denoted by $H^{-1/2}(\partial\Omega)$. 

For some of our arguments, we will transform the quasilinear equation \eqref{eq:quasi} into a semilinear one.
To do so, let us introduce the primitive function 
\begin{align*}
  A:\RR\to\RR,\quad A(u) = \int_0^u a(\tilde u)\d \tilde u,
\end{align*}
which is monotonically increasing and differentiable. Since we assumed that 
$a\geq \alpha >0$, the function $A$ is one-to-one and onto, and we can define its inverse $H:\RR\to \RR$, $H(U)=A^{-1}(U)$ with derivative
\begin{align*}
  \frac{1}{\alpha} \ge H'(U) = \frac{1}{a(H(U))} \geq \alpha>0.
\end{align*}
For any weak solution $u$ of \eqref{eq:quasi}--\eqref{eq:quasi_bc}, 
the function $U=A(u)$ then solves the boundary value problem
\begin{eqnarray}
  -\Delta U + c H(U) &=& 0 \, \qquad\text{in }\Omega, \label{eq:semi} \\
		  U & =& G \qquad\text{on }\partial\Omega,  \label{eq:semi_bc}
\end{eqnarray}
with boundary datum $G = A(g)$. 
Note that by our assumption on the coefficients $u=H(U)\in H^1(\Omega)$ whenever $U\in H^1(\Omega)$; this follows easily from the monotonicity and differentiability of $H$ and using the chain rule for Sobolev functions \cite{GT}.
The next theorem establishes the well-posedness of the problems \eqref{eq:quasi}--\eqref{eq:quasi_bc} and \eqref{eq:semi}--\eqref{eq:semi_bc}, respectively.
\begin{theorem}\label{thm:existence}
  Let Assumption~\ref{ass:1} hold. Then for every $g\in H^{1/2}(\partial\Omega)$ there exists a unique solution $u\in H^1(\Omega)$ to \eqref{eq:quasi}--\eqref{eq:quasi_bc} which satisfies the a-priori estimates
  \begin{align}
    \|u\|_{H^1(\Omega)} &\leq C \|g\|_{H^{1/2}(\partial\Omega)}, \label{eq:apriori}
  \end{align}
with a constant $C$ depending only on $\alpha$ and $\Omega$.
\end{theorem}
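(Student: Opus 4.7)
The natural strategy, for which the authors have already laid the groundwork, is to work with the transformed semilinear problem \eqref{eq:semi}--\eqref{eq:semi_bc} rather than the quasilinear one. Since $A\colon\RR\to\RR$ is bi-Lipschitz with constants $\alpha$ and $1/\alpha$, the map $g\mapsto G=A(g)$ is a bi-Lipschitz bijection of $H^{1/2}(\partial\Omega)$, and similarly $U\mapsto u=H(U)$ is a bi-Lipschitz bijection of $H^1(\Omega)$ (by the chain rule for Sobolev functions). Hence it suffices to establish existence, uniqueness and a corresponding a-priori bound for the semilinear problem, and then translate back. Throughout, I would denote by $U_G\in H^1(\Omega)$ a bounded linear extension of $G$ with $\|U_G\|_{H^1(\Omega)}\le C\|G\|_{H^{1/2}(\partial\Omega)}$.

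For the semilinear problem I would use the direct method. Setting $\widetilde H(s)=\int_0^s H(r)\,\d r$, the function $\widetilde H$ is convex (since $H$ is monotonically increasing) and nonnegative (since $H(0)=0$). The functional
\begin{equation*}
  J(U)=\tfrac12\int_\Omega |\nabla U|^2\,\d x+\int_\Omega c(x)\,\widetilde H(U)\,\d x
\end{equation*}
is therefore strictly convex, continuous and coercive on the affine subspace $\{U\in H^1(\Omega):U|_{\partial\Omega}=G\}$ (coercivity follows from the Dirichlet term together with Poincar\'e's inequality applied to $U-U_G$). Minimizing $J$ yields a unique solution $U\in H^1(\Omega)$, whose Euler--Lagrange equation is precisely the weak form of \eqref{eq:semi}--\eqref{eq:semi_bc}; uniqueness of weak solutions follows either from strict convexity or, equivalently, by testing the difference of two solutions against itself and invoking monotonicity of $U\mapsto cH(U)$.

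For the a-priori estimate I would test the weak formulation with $U-U_G\in H^1_0(\Omega)$, obtaining
\begin{equation*}
  \int_\Omega |\nabla U|^2\,\d x+\int_\Omega cH(U)(U-U_G)\,\d x=\int_\Omega \nabla U\cdot\nabla U_G\,\d x.
\end{equation*}
Using $H(0)=0$ and monotonicity, $cH(U)\,U\ge 0$ a.e., while $|H(U)|\le |U|/\alpha$ gives $|\int cH(U)U_G|\le \alpha^{-1}\|c\|_{L^\infty}\|U\|_{L^2}\|U_G\|_{L^2}$. Combining with Cauchy--Schwarz on the right-hand side, Young's inequality, and Poincar\'e's inequality for $U-U_G$, I obtain $\|U\|_{H^1(\Omega)}\le C\|U_G\|_{H^1(\Omega)}\le C'\|G\|_{H^{1/2}(\partial\Omega)}$ with $C'$ depending only on $\alpha$ and $\Omega$. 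Transforming back via $u=H(U)$ and $g=H(G)$ and using the bi-Lipschitz property of $H$ and $A$ yields the stated bound \eqref{eq:apriori}.

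The only genuinely nonlinear step is handling the term $\int cH(U)(U-U_G)$ in the a-priori estimate; the key observation that makes it tractable is that $cH(U)U$ has a sign while $H$ grows at most linearly, so no structural fixed-point or monotone-operator machinery beyond the variational argument is needed.
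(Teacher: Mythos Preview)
Your argument is correct, but it follows a genuinely different route from the paper's. The paper proves existence directly for the quasilinear problem \eqref{eq:quasi}--\eqref{eq:quasi_bc} via Schauder's fixed-point theorem: freezing $\tilde u$ in the diffusion coefficient gives a linear elliptic problem whose solution operator $T$ maps a bounded set in $H^1$ (hence compact in $L^2$) continuously into itself; the a-priori bound \eqref{eq:apriori} is then inherited from the linear estimate. Only uniqueness is handled through the semilinear transformation, by applying the weak maximum principle to the difference $U_1-U_2$. You, by contrast, pass to the semilinear problem from the outset and exploit its variational structure: the strictly convex energy $J(U)=\tfrac12\|\nabla U\|^2+\int c\,\widetilde H(U)$ delivers existence and uniqueness simultaneously, and the a-priori bound comes from an energy estimate. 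Your approach is more self-contained and arguably cleaner for this particular problem, since it uses the monotonicity of $H$ and the sign condition $c\ge0$ in an essential and unified way; the paper's Schauder argument is more robust in that it would survive nonvariational perturbations of the equation, and it yields the a-priori estimate with no additional computation. One small remark: where you write ``continuous'' for $J$, what the direct method actually needs is weak lower semicontinuity, which follows from convexity of $\widetilde H$, nonnegativity of $c\widetilde H$, and compact embedding $H^1\hookrightarrow L^2$; this is implicit in your outline but worth stating.
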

We could not find a reference for this result, so we sketch the proof.
\begin{proof}
Let us first establish the existence of a solution:
  Given $\tilde u \in L^2(\Omega)$, consider the linear boundary value problem 
  \begin{eqnarray*}
    -\div(a(\tilde u)\nabla u) + c u & = & 0 \quad \text{in } \Omega,\\
			    u & = & g \quad \text{on }\partial \Omega.
  \end{eqnarray*}
  Since $\tilde u$ is measureable, so is $a(\tilde u)$, and since $a(\tilde u)\ge \alpha>0$, the existence of a unique solution $u \in H^1(\Omega)$ is ensured by the Lax-Milgram lemma, cf. \cite[Theorem~5.8]{GT}. Moreover, we have $\|u\|_{H^1(\Omega)} \le C \|g\|_{H^{1/2}(\partial \Omega)}$ with $C$ only depending on $\alpha$ and $\Omega$. 
  Next, consider the nonlinear operator $T:L^2(\Omega)\to L^2(\Omega)$ defined by $T\tilde u:=u$ with $u$ the solution of the problem above. We will establish the existence of a fixed-point for the mapping $T$, which then is a solution of \eqref{eq:quasi}--\eqref{eq:quasi_bc}, by a compactness argument:
  Due to the a-priori estimate for the linear problem, $T$ maps the compact convex set $M=\{v \in L^2(\Omega) : \|v\|_{H^1(\Omega)} \le C \|g\|_{H^{1/2}(\partial \Omega)}\}$ into itself. Moreover, $T$ is continuous, which can be seen as follow: observe that $\tilde u_n \to \tilde u$ in $L^2(\Omega)$ implies that $\tilde u_{n_k} \to \tilde u$ a.e. for some subsequence $\tilde u_{n_k}$. 
  By Assumption~\ref{ass:1} and Lebesgue's dominated convergence theorem, we get $a(\tilde u_{n_k}) \nabla u \to a(\tilde u) \nabla u$ in $L^2(\Omega)$. Together with the a-priori estimate for the linear problem, this yields the continuity of $T$; see also the proof of Lemma~\ref{lem:linearize}.
  The existence of a fixed-point for $T$ in $M$ then follows by Schauder's fixed-point theorem \cite[Theorem~11.1]{GT}. 
  Clearly, any regular fixed-point of $T$ is also a solution of \eqref{eq:quasi}--\eqref{eq:quasi_bc} and the a-priori estimate follows from the definition of the set $M$.

  Let us now turn to the question of uniqueness: 
  Assume that there exist two solutions $u_1$, $u_2$ to \eqref{eq:quasi}--\eqref{eq:quasi_bc} with the same Dirichlet boundary data and set $U_1 = A(u_1)$ and $U_2= A(u_2)$. Then, $U=U_1-U_2$ solves
  \begin{align*}
    -\Delta U + c H'(\xi(x)) U &= 0 \quad \text{ in }\Omega,\\
	U&=0 \quad \text{ on } \partial \Omega,
  \end{align*}
  where we used $H(U_1(x))-H(U_2(x)) = H'(\xi(x))(U_1(x)-U_2(x))$ a.e. for some measureable function $\xi(x)$. Since $H'\geq 0$, we obtain from the weak maximum principle \cite[Theorem~8.1]{GT} that $U \equiv 0$, and by monotonicity of $A$ we deduce that $u_1=u_2$.
\end{proof}

To give a precise definition of the Dirichlet-to-Neumann map in our functional setting, we introduce for $u\in H^1(\Omega)$ the generalized co-normal derivative $a(u)\partial_n u\in H^{-1/2}(\partial\Omega)$ as in \cite{McLean} by
\begin{align}\label{eq:conormal}
\langle a(u) \partial_n u, \left.v\right|_{\partial\Omega}\rangle = \int_\Omega a(u)\nabla u\nabla v + c uv \d x, \quad v\in H^1(\Omega).
\end{align}
Here, $\langle\cdot,\cdot\rangle$ denotes the duality pairing of $H^{-1/2}(\partial\Omega)$ and $H^{1/2}(\partial\Omega)$. Note that this definition coincides with the usual definition of the co-normal derivative if $u$ is a solution of \eqref{eq:quasi}. This motivates the following
\begin{definition} 
For any pair of coefficients $a$ and $c$ satisfying Assumption~\ref{ass:1}, we define the \emph{Dirichlet-to-Neumann map}
\begin{align*}
  \Lambda_{a,c}: H^{1/2}(\partial\Omega) \to H^{-1/2}(\partial\Omega),\quad g \mapsto a(u) \partial_n u,
\end{align*}
where $u\in H^1(\Omega)$ is the solution of \eqref{eq:quasi}--\eqref{eq:quasi_bc} with boundary value $g$.
\end{definition}

After establishing the well-posedness of the governing boundary value problem and defining the 
Dirichlet-to-Neuman map rigorously, we can now start to investigate the inverse problem of identifying the coefficients $a$ and $c$.

\section{Uniqueness of $a(0)$ and $c$}\label{sec:identify_a0}

Following an idea of Isakov \cite{Isakov93}, we employ a linearization strategy to obtain uniqueness for $a(0)$ and $c(x)$. 
Consider the following linear boundary value problem
\begin{eqnarray}
  -a(0)\Delta v + c v & = & 0\ \quad \text{in } \Omega, \label{eq:linear}\\
		  v & = & g^* \quad \text{on }\partial \Omega. \label{eq:linear_bc}
\end{eqnarray}
The existence of a unique weak solution $v \in H^1(\Omega)$ follows again from the Lax-Milgram Theorem. The Dirichlet-to-Neumann map associated with the linear problem is given by
\begin{align*}
  \Lambda^*_{a(0),c}: H^{1/2}(\partial \Omega) \to H^{-1/2}(\partial\Omega), \quad g^*\mapsto a(0)\partial_n v,
\end{align*}
where $v$ is the solution of \eqref{eq:linear}--\eqref{eq:linear_bc} with boundary datum $g^*$.
With similar arguments as in \cite{Isakov93}, we obtain
\begin{lemma}\label{lem:linearize}
  The Dirichlet-to-Neumann map $\Lambda_{a,c}$ for \eqref{eq:quasi}--\eqref{eq:quasi_bc} determines the Dirichlet-to-Neumann map $\Lambda^*_{a(0),c}$ associated with \eqref{eq:linear}--\eqref{eq:linear_bc}.
\end{lemma}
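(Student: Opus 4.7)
The plan is to recover $\Lambda^*_{a(0),c}$ from $\Lambda_{a,c}$ via a linearization at zero boundary data. For $\eps>0$ and fixed $g^*\in H^{1/2}(\partial\Omega)$, let $u_\eps\in H^1(\Omega)$ denote the unique solution of \eqref{eq:quasi}--\eqref{eq:quasi_bc} with boundary datum $\eps g^*$, and set $v_\eps := u_\eps/\eps$. Rescaling \eqref{eq:quasi}--\eqref{eq:quasi_bc} shows that $v_\eps$ satisfies, weakly,
\begin{equation*}
-\div(a(u_\eps)\nabla v_\eps) + c\, v_\eps = 0 \text{ in } \Omega, \qquad v_\eps = g^* \text{ on } \partial\Omega.
\end{equation*}
The a-priori estimate \eqref{eq:apriori} gives $\|u_\eps\|_{H^1(\Omega)}\le C\eps\|g^*\|_{H^{1/2}(\partial\Omega)}$, and consequently $u_\eps\to 0$ strongly in $L^2(\Omega)$, together with the $\eps$-uniform bound $\|v_\eps\|_{H^1(\Omega)}\le C\|g^*\|_{H^{1/2}(\partial\Omega)}$. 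The target is to show that $v_\eps\to v$ in $H^1(\Omega)$, where $v$ solves the linear problem \eqref{eq:linear}--\eqref{eq:linear_bc}, and then to transfer this to convergence of the co-normal derivatives.

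For the identification step I would extract a subsequence $v_{\eps_k}\rightharpoonup v$ weakly in $H^1(\Omega)$, retaining the trace $g^*$, and then a further subsequence for which $u_{\eps_k}\to 0$ a.e.\ in $\Omega$. Continuity of $a$, the bounds $\alpha\le a\le 1/\alpha$ from Assumption~\ref{ass:1} and Lebesgue's dominated convergence theorem then yield $a(u_{\eps_k})\to a(0)$ strongly in $L^p(\Omega)$ for every $p<\infty$. Passing to the limit in the weak formulation
\begin{equation*}
\int_\Omega a(u_{\eps_k})\nabla v_{\eps_k}\cdot\nabla w \d x + \int_\Omega c\, v_{\eps_k} w \d x = 0, \qquad w\in H^1(\Omega),\ w|_{\partial\Omega}=0,
\end{equation*}
is the central analytic step: I would rewrite the diffusion term as $\int_\Omega \nabla v_{\eps_k}\cdot\bigl(a(u_{\eps_k})\nabla w\bigr) \d x$ and combine strong convergence $a(u_{\eps_k})\nabla w\to a(0)\nabla w$ in $L^2(\Omega)$ with weak convergence $\nabla v_{\eps_k}\rightharpoonup \nabla v$ in $L^2(\Omega)$; the lower-order term is handled via the compact embedding $H^1(\Omega)\hookrightarrow L^2(\Omega)$. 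This identifies $v$ as the unique solution of \eqref{eq:linear}--\eqref{eq:linear_bc}, and uniqueness of that linear problem then forces the entire family $v_\eps$ to converge.

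To conclude, combining $u_\eps=\eps v_\eps$ with the variational definition \eqref{eq:conormal} of the co-normal derivative gives, for every $w\in H^1(\Omega)$,
\begin{equation*}
\bigl\langle \tfrac{1}{\eps}\Lambda_{a,c}(\eps g^*), w|_{\partial\Omega}\bigr\rangle = \int_\Omega a(u_\eps)\nabla v_\eps\cdot\nabla w + c\, v_\eps w \d x \longrightarrow \int_\Omega a(0)\nabla v\cdot\nabla w + c\, v w \d x = \langle \Lambda^*_{a(0),c}(g^*), w|_{\partial\Omega}\rangle,
\end{equation*}
so that $\Lambda^*_{a(0),c}(g^*)=\lim_{\eps\to 0}\eps^{-1}\Lambda_{a,c}(\eps g^*)$ in $H^{-1/2}(\partial\Omega)$ and is therefore determined by $\Lambda_{a,c}$. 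I expect the main obstacle to lie in the passage to the limit in the quasilinear diffusion term: only weak $L^2$ convergence of $\nabla v_{\eps_k}$ is available, so the decisive trick is to move the rough factor $a(u_{\eps_k})$ onto the smooth test function $w$, where it converges strongly in $L^2$, and thereby pair strong against weak convergence to obtain the desired limit.
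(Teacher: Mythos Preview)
Your argument is correct and reaches the same conclusion, but via a genuinely different route than the paper. The paper does not use weak compactness at all: it introduces directly the difference $w_\tau := v - v_\tau$, observes that $w_\tau \in H^1_0(\Omega)$ solves the linear elliptic problem
\[
-\div\bigl(a(u_\tau)\nabla w_\tau\bigr) + c\,w_\tau = -\div\bigl((a(u_\tau)-a(0))\nabla v\bigr),
\]
and applies the standard energy estimate to obtain $\|w_\tau\|_{H^1(\Omega)} \le C\,\|(a(u_\tau)-a(0))\nabla v\|_{L^2(\Omega)}$. Since $\nabla v$ is a \emph{fixed} $L^2$ function, dominated convergence (after the same a.e.\ subsequence argument you use) drives the right-hand side to zero, so $v_\tau \to v$ \emph{strongly} in $H^1(\Omega)$, and this in turn yields norm convergence of the co-normal derivatives in $H^{-1/2}(\partial\Omega)$.

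Your compactness route is valid but slightly less sharp: as written it only produces $v_\eps \rightharpoonup v$ weakly in $H^1(\Omega)$, and hence only weak* convergence of $\eps^{-1}\Lambda_{a,c}(\eps g^*)$ in $H^{-1/2}(\partial\Omega)$ (your displayed limit holds for each fixed $w$, not uniformly over $\|w\|_{H^1}\le 1$). That is still sufficient to determine $\Lambda^*_{a(0),c}(g^*)$, so the lemma follows; but the paper's direct estimate bypasses both the subsequence extraction and your ``move the rough coefficient onto the test function'' trick, and delivers the stronger mode of convergence with no extra work. Incidentally, testing your two weak formulations against $w=v_\eps-v\in H^1_0(\Omega)$ recovers exactly the paper's estimate and would upgrade your convergence to strong $H^1$ as well.
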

\begin{proof}
Let $g^* \in H^{1/2}(\partial \Omega)$ be given. 
For any $\tau\in\RR$, we denote by $u_\tau$ the solution of \eqref{eq:quasi}--\eqref{eq:quasi_bc} with boundary value $\tau g^*$. By Theorem~\ref{thm:existence}, such a solution $u_\tau$ exists and is unique, and  for $\tau = 0$ we have $u_0 \equiv 0$. 
The function $v_\tau := (u_\tau-u_0)/\tau = u_\tau / \tau$ then is a solution of
  \begin{eqnarray*}
  -\div(a(u_\tau)\nabla v_\tau) + c v_\tau & = & 0\ \quad \text{in } \Omega,\\
		  v_\tau & = & g^* \quad \text{on }\partial \Omega. 
  \end{eqnarray*}
  Moreover, with $v$ defined by \eqref{eq:linear}--\eqref{eq:linear_bc}, the difference $w_\tau=v-v_\tau$ solves
  \begin{eqnarray*}
  -\div(a(u_\tau)\nabla w_\tau) + c w_\tau & = & -\div((a(u_\tau)-a(0))\nabla v) \quad \text{in } \Omega,\\
		  w_\tau & = & 0 \quad \text{on }\partial \Omega. 
  \end{eqnarray*}
  Using standard a-priori estimates for linear elliptic problems and Assumption~\ref{ass:1}, we obtain
  \begin{align*}
    \|w_\tau\|_{H^1(\Omega)} \leq C \|(a(u_\tau)-a(0))\nabla v\|_{L^2(\Omega)},
  \end{align*}
  with a constant $C$ depending only on $\alpha$ and $\Omega$.
  Using the a-priori estimate \eqref{eq:apriori}, we obtain $u_\tau\to 0$ in $H^1(\Omega)$ as $\tau\to 0$, and hence, by a subsequence argument, $u_\tau(x)\to 0$ as $\tau\to 0$ for a.e.\@ $x\in\Omega$. By continuity of the parameter, it follows that $a(u_\tau(x))\to a(0)$ for a.e.\@  $x\in\Omega$, and from Lebesgue's dominated convergence theorem, we infer that $w_\tau\to 0$ in $H^1(\Omega)$ as $\tau\to 0$. Using the definition of the co-normal derivative \eqref{eq:conormal}, we further obtain
  \begin{align*}
   \frac{1}{\tau}\Lambda_{a,c} \tau g^* = a(u_\tau)\partial_n v_\tau \to a(0)\partial_n v= \Lambda^*_{a(0),c} g^*
  \end{align*}
  in $H^{-1/2}(\partial\Omega)$ as $\tau \to 0$, and hence $\Lambda^*_{a(0),c}$ is determined by $\Lambda_{a,c}$.
\end{proof}
As a next step, we turn to the identification of $a(0)$ and $c(x)$ from knowledge of the linearized Dirichlet-to-Neumann map $\Lambda^*_{a(0),c}$. 
Let $(a_1, c_1)$ and $(a_2,c_2)$ satisfy Assumption~\ref{ass:1} and denote by $v_1$ and $v_2$ the corresponding solutions of \eqref{eq:linear}--\eqref{eq:linear_bc} with coefficients $(a_1(0),c_1)$ and $(a_2(0),c_2)$, respectively. 
The definition of the co-normal derivative yields the following orthogonality relation
\begin{align}\label{eq:orth_linear}
 & \langle \big(\Lambda^*_{a_1(0),c_1} - \Lambda^*_{a_2(0),c_2}\big)g^*,  g^*\rangle \\
 &\qquad \qquad = \int_{\Omega} (a_1(0)-a_2(0)) \nabla v_1 \nabla v_2 + (c_1-c_2) v_1 v_2 \d x. \notag
\end{align}
We are now in a position to prove the following
\begin{theorem}\label{thm:identify_a0}
  If $\Lambda^*_{a_1(0),c_1}= \Lambda^*_{a_2(0),c_2}$ then $a_1(0)=a_2(0)$.
\end{theorem}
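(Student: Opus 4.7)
The plan is to argue by contradiction: assume $a_1(0) \neq a_2(0)$. Since $\Lambda^*_{a_1(0),c_1} = \Lambda^*_{a_2(0),c_2}$ by hypothesis, the orthogonality relation \eqref{eq:orth_linear} forces
\begin{equation*}
(a_1(0) - a_2(0))\int_\Omega \nabla v_1 \nabla v_2 \, \d x + \int_\Omega (c_1-c_2) v_1 v_2 \, \d x = 0
\end{equation*}
for every $g^* \in H^{1/2}(\partial\Omega)$. The strategy is to exhibit a family of boundary data for which the gradient integral blows up while the lower order integral stays bounded; this forces $a_1(0) = a_2(0)$.

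To produce such a family I would follow the well-known singular-solutions technique of Alessandrini. Fix a point $x_0$ on a $C^1$-portion of $\partial\Omega$, let $y \notin \bar\Omega$ approach $x_0$ from outside, and set $\delta := \operatorname{dist}(y, \partial\Omega)$. Take $g^*_y$ to be the boundary trace of the fundamental solution $\Phi(\cdot;y)$ of $-\Delta$ centered at $y$. Since $y \notin \bar\Omega$, the function $\Phi(\cdot;y)$ is harmonic in $\Omega$, so the decomposition $v_i = \Phi(\cdot;y) + w_i$ reduces the problem for $w_i$ to the zero-Dirichlet problem $-a_i(0)\Delta w_i + c_i w_i = -c_i \Phi(\cdot;y)$ in $\Omega$. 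The standard energy estimate together with Poincar\'e's inequality gives $\|w_i\|_{H^1(\Omega)} \leq C\|\Phi(\cdot;y)\|_{L^2(\Omega)}$, and the right-hand side is bounded uniformly in $\delta$ for $n \in \{2,3\}$ since $\Phi(\cdot;y)$ is square-integrable up to the boundary with a uniform bound.

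Expanding both integrals with $v_i = \Phi + w_i$, the leading asymptotics as $\delta \to 0$ are $\int_\Omega \nabla v_1 \nabla v_2 \sim \int_\Omega |\nabla \Phi(\cdot;y)|^2$, which diverges like $\delta^{-1}$ in three space dimensions and like $|\log\delta|$ in two. In contrast $\int_\Omega v_1 v_2 = O(1)$, since $\|\Phi(\cdot;y)\|_{L^2}$ and $\|w_i\|_{H^1}$ are uniformly bounded. The cross terms $\int_\Omega \nabla\Phi \nabla w_i$ are controlled by Cauchy-Schwarz via $\|\nabla\Phi\|_{L^2}\|\nabla w_i\|_{L^2}$, which is only of order $\sqrt{\int_\Omega |\nabla\Phi|^2}$ and hence strictly lower order than the leading divergence. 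Dividing the displayed equation by $\int_\Omega |\nabla\Phi(\cdot;y)|^2$ and sending $\delta \to 0$ then yields $a_1(0) = a_2(0)$, the desired contradiction.

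The main technical hurdle is to rigorously separate the scales: to pin down the precise divergence rate of $\int_\Omega |\nabla\Phi(\cdot;y)|^2$ and to verify that the correction contributions involving $w_i$ are of strictly lower order. This is where the piecewise $C^1$ regularity of $\partial\Omega$ enters, through choosing $x_0$ on a smooth portion of the boundary so that, up to a local change of variables, the geometry near $x_0$ reduces to that of a half-space and the classical asymptotics of the Laplace fundamental solution become available.
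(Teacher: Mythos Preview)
Your proposal is correct and follows essentially the same route as the paper: argue by contradiction, insert into the orthogonality relation the singular solutions $v_i=\Phi_y+w_i$ obtained from the fundamental solution of the Laplacian with pole $y\notin\overline\Omega$, and let $y$ approach the boundary so that the gradient term diverges while the zero-order term stays bounded. Your write-up is in fact slightly more explicit than the paper's about the divergence rates and the control of the cross terms $\int_\Omega\nabla\Phi\,\nabla w_i$, which the paper absorbs into the statement that $\|\nabla v_1\nabla v_2\|_{L^1(\Omega)}$ blows up as $\operatorname{dist}(y,\partial\Omega)\to 0$.
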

\begin{proof}
The proof is inspired by the construction of singular solutions utilized in \cite{Alessandrini90}. Let $\Phi_y(x)$ be the fundamental solution for the Laplace equation, i.e., we have $\Phi_y(x)= 1/|x-y|$ for $n=3$ and $\Phi_y(x)=\log(|x-y|)$ for $n=2$. Note that for any $y \in \RR^n$ we have $\Phi_y\in L^2(\Omega)$ while $\Phi_y \in H^1(\Omega)$, if, and only if, $y \notin \overline{\Omega}$. 
  Now suppose that $a_1(0)\neq a_2(0)$ and let $w_i\in  H^{1}_0(\Omega)$, $i=1,2$, be the solution of 
  \begin{align*}
  -a_i(0)\Delta w_i + c_i w_i & =  - c_i \Phi_y \, \quad \text{in } \Omega,\\
		  w_i & =  0\qquad\quad \text{on }\partial \Omega.
  \end{align*}
  The function $v_i=w_i + \Phi$ then is a solution of \eqref{eq:linear}--\eqref{eq:linear_bc} with $g^*=\Phi$, and we see that $\|v_1 v_2\|_{L^1(\Omega)} \le C$ for all $y \in \RR^n$, 
  but $\|\nabla v_1\nabla v_2\|_{L^1(\Omega)} < \infty$ only if $y \notin \overline{\Omega}$. Inserting $v_1$ and $v_2$ into the orthogonality relation \eqref{eq:orth_linear} and rearranging terms, we obtain
  \begin{align*}
      (a_1(0)-a_2(0)) \int_{\Omega} \nabla v_1 \nabla v_2 \d x= \int_\Omega (c_2-c_1) v_1 v_2 \d x.
  \end{align*}
  Since the integral on the right-hand side is uniformly bounded, but that on the left-hand side diverges as ${\rm dist}(y,\partial \Omega)\to 0$, we arrive at a contradiction. Hence, $a_1(0) = a_2(0)$.
\end{proof}

Once $a(0)$ is determined, the uniqueness of $c(x)$ follows from known results: The three-dimensional case can be found in \cite{SylvesterUhlmann87} or \cite[Theorem~5.2.2]{Isakov06}.
For $0\leq c\in L^\infty(\Omega)$, the uniqueness result for $n=2$ can be deduced from the uniqueness of the conductivity problem \cite{Astala06}, see \cite[~Corollary~5.5.2]{Isakov06}. The restriction $c\geq 0$ can possibly be relaxed using the results of \cite{Bukhgeim2008,ImanuvilovYamamoto2012}.
Thus we obtain
\begin{theorem}\label{thm:identify_a0_c}
   Assume that $\Lambda^*_{a_1(0),c_1}= \Lambda^*_{a_2(0),c_2}$. Then $a_1(0)=a_2(0)$ and $c_1(x)=c_2(x)$ for a.e. $x \in \Omega$.
\end{theorem}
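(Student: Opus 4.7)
The plan is to combine Theorem~\ref{thm:identify_a0}, which already supplies the first half of the conclusion, with a classical identifiability result for the linear Schrödinger equation. First I would apply Theorem~\ref{thm:identify_a0} directly to the hypothesis $\Lambda^*_{a_1(0),c_1} = \Lambda^*_{a_2(0),c_2}$ to obtain $a_1(0) = a_2(0) =: a_0$, so that the two linearized problems share the same (positive) constant diffusion coefficient.

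Next I would reduce the remaining identification of $c$ to a standard Schrödinger inverse problem. Dividing the equation $-a_0 \Delta v + c_i v = 0$ by $a_0 > 0$ turns it into $-\Delta v + q_i v = 0$ with potentials $q_i := c_i/a_0 \in L^\infty(\Omega)$, $q_i \geq 0$ by Assumption~\ref{ass:1}. Since $a_0$ is a nonzero constant, the identity of the Dirichlet-to-Neumann maps $\Lambda^*_{a_0,c_1} = \Lambda^*_{a_0,c_2}$ is equivalent to the identity of the Schrödinger Dirichlet-to-Neumann maps $g^* \mapsto \partial_n v_i$. At this point I would simply cite the known uniqueness results: in three dimensions the Sylvester--Uhlmann theorem obtained via complex geometric optics solutions gives $q_1 = q_2$ a.e., cf.\ \cite{SylvesterUhlmann87} and \cite[Theorem~5.2.2]{Isakov06}; in two dimensions the same conclusion follows from the Astala--Päivärinta uniqueness theorem for the conductivity problem \cite{Astala06} via the standard reduction from Schrödinger potentials with $q \geq 0$ to isotropic conductivities, see \cite[Corollary~5.5.2]{Isakov06}. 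Multiplying back by $a_0$ yields $c_1 = c_2$ a.e.\ in $\Omega$.

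There is essentially no genuine obstacle to overcome in this proof beyond quoting the appropriate external results; the only point that requires care is the structural reduction from the operator $-a_0 \Delta + c$ to the normalized Schrödinger form, which is routine since $a_0$ is a strictly positive constant. The sign assumption $c \geq 0$ from Assumption~\ref{ass:1} is precisely what is needed to apply the two-dimensional conductivity machinery; as noted in the surrounding text, this could presumably be relaxed by instead invoking the Schrödinger-type results of \cite{Bukhgeim2008,ImanuvilovYamamoto2012}, but that extension is not required for the present statement.
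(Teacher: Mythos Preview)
Your proposal is correct and follows exactly the paper's approach: invoke Theorem~\ref{thm:identify_a0} for $a_1(0)=a_2(0)$, then cite \cite{SylvesterUhlmann87}/\cite[Theorem~5.2.2]{Isakov06} in dimension three and \cite{Astala06}/\cite[Corollary~5.5.2]{Isakov06} in dimension two for the identification of $c$. You spell out the division by $a_0$ to pass to the normalized Schr\"odinger form a bit more explicitly than the paper does, but the argument is the same.
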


\section{Identification of $a$}\label{sec:identify_a}
To show the uniqueness of  $a(u)$ for $u \ne 0$, we translate the techniques of the previous section to the nonlinear problem.
By the definition of the co-normal derivative \eqref{eq:conormal}, there holds
\begin{align*}
  \langle \Lambda_{a,c} g, \left.\lambda\right|_{\partial\Omega} \rangle = \int_{\Omega} a(u) \nabla u\nabla \lambda + cu \lambda\d x
\end{align*}
for any function $\lambda\in H^1(\Omega)$. 
Subtracting this identity for two pairs $(a_1,c)$ and $(a_2,c)$ of admissible parameters,
and using $\nabla( A_i(u(x)) = A_i'(u(x)) \nabla u = a_i(u(x)) \nabla u,\,i=1,2$ and integration by parts we get
\begin{align*}
  \langle (\Lambda_{a_1,c}-\Lambda_{a_2,c}) g, \left.\lambda\right|_{\partial\Omega} \rangle 
  & = \int_\Omega \left( A_1(u_1)-A_2(u_2) \right)\left(-\Delta\lambda\right) + c(u_1-u_2)\lambda \d x \\
  & + \int_{\partial\Omega} \left( A_1(g) - A_2(g) \right) \partial_n \lambda \d s
\end{align*}
To simplify this expression, we consider only test functions $\lambda$ which are solutions of
\begin{eqnarray}
  -\Delta\lambda &=& 0        \ \  \quad\text{in } \Omega,\label{eq:adjoint}\\
			       \lambda &=& \lambda_D \quad\text{on }\partial\Omega \label{eq:adjoint_bc}
\end{eqnarray}
for some appropriate boundary datum $\lambda_D$, which yields 
%
\begin{align}\label{eq:orth_nonlinear}
 \langle\big(\Lambda_{a_1,c}-\Lambda_{a_2,c}\big) g, \lambda_D \rangle =  \int_\Omega c(u_1-u_2)\lambda \d x + \int_{\partial\Omega} \left( A_1(g) - A_2(g) \right) \partial_n \lambda \d s.
\end{align}
Note that the left hand side will vanish, if the Dirichlet-to-Neumann maps coincide. 
We can therefore retrieve information about $a_1-a_2$, by choosing a suitable function $\lambda$ satisfying \eqref{eq:adjoint}--\eqref{eq:adjoint_bc}. 
%
\begin{theorem}\label{thm:identify_a}
  Let $\Lambda_{a_1,c}=\Lambda_{a_2,c}$ for some $a_1$, $a_2$ and $c$ satisfying Assumption~\ref{ass:1}. 
  Then $a_1(u) = a_2(u)$ for all $u\in\RR$.
\end{theorem}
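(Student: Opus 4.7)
The starting point is the orthogonality relation \eqref{eq:orth_nonlinear}: under the hypothesis $\Lambda_{a_1,c}=\Lambda_{a_2,c}$ it reads, for every $g\in H^{1/2}(\partial\Omega)$ and every harmonic $\lambda\in H^1(\Omega)$,
\[
 0=\int_\Omega c\,(u_1-u_2)\,\lambda\,\d x+\int_{\partial\Omega}\bigl(A_1(g)-A_2(g)\bigr)\partial_n\lambda\,\d s.
\]
Fix $t\in\RR$; the goal is $A_1(t)=A_2(t)$ (equivalently $a_1(t)=a_2(t)$). I would specialize $g\equiv t$. Both solutions then share the Dirichlet trace $t$, and the DtN hypothesis forces the co-normal derivatives to coincide. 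Setting $\delta:=A_1(t)-A_2(t)$ and $W:=A_1(u_1)-A_2(u_2)$, the chain rule $\nabla A_i(u_i)=a_i(u_i)\nabla u_i$ together with the PDEs for $u_i$ yield
\[
 \Delta W=c\,(u_1-u_2)\text{ in }\Omega,\qquad W|_{\partial\Omega}=\delta,\qquad \partial_n W|_{\partial\Omega}=0,
\]
so that $W$ solves an overdetermined elliptic problem with constant Cauchy data $(\delta,0)$.

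The crucial step is to use this overdetermined structure to force $c\,(u_1-u_2)\equiv 0$ in $\Omega$. Once this is granted, and under the simplifying hypothesis $c>0$ a.e.\ (with a separate treatment on $\{c=0\}$), we obtain $u_1=u_2=:u$. Subtracting the two quasilinear equations then gives $\Delta[(A_1-A_2)(u)]=0$ in $\Omega$ with boundary value $\delta$; by the maximum principle $(A_1-A_2)(u(x))\equiv\delta$ for all $x\in\Omega$. Varying $t$, the strong maximum principle and continuous dependence show that the range $I_t$ of the solution $u^t$ is a non-degenerate interval containing $t$ which shrinks to $\{0\}$ as $t\to 0$. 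Since $A_1-A_2\equiv\delta(t)$ on $I_t$ while the $I_t$'s overlap for nearby values of $t$, the function $\delta(\cdot)$ is locally constant on $(0,\infty)$, and together with continuity and $\delta(0)=0$ this forces $\delta\equiv 0$ on $[0,\infty)$; the case $t<0$ is analogous.

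The main obstacle I anticipate is the implication $c\,(u_1-u_2)\equiv 0$. The orthogonality \eqref{eq:orth_nonlinear} specialized to $g\equiv t$ and harmonic $\lambda$ yields only $\int_\Omega c(u_1-u_2)\lambda\,\d x=0$ for every harmonic $\lambda\in H^1(\Omega)$, and a short application of Green's identity shows that this is already a formal consequence of $\partial_n W=0$ and therefore contains no information beyond the DtN hypothesis itself. The proof must accordingly exploit non-constant boundary data $g$ and/or non-harmonic test functions $\lambda$ — so as to turn the integral identity into pointwise information on $A_1-A_2$ — or else apply a Holmgren-type uniqueness principle directly to the overdetermined Cauchy problem for $W$.
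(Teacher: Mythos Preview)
Your diagnosis at the end is correct, and it is fatal: with constant $g\equiv t$ the crucial step cannot be closed. The Cauchy data $(W,\partial_n W)|_{\partial\Omega}=(\delta,0)$ for $\Delta W=f$ do \emph{not} force $f=0$ --- for any smooth $V$ compactly supported in $\Omega$, the function $W=\delta+V$ has the same Cauchy data with $f=\Delta V\not\equiv 0$ --- and the specific form $f=c(u_1-u_2)$ carries no usable extra rigidity, since the relation between $u_1-u_2$ and $W=A_1(u_1)-A_2(u_2)$ involves both unknown nonlinearities and does not close to an equation in $W$ alone to which Holmgren-type uniqueness could be applied. The constant-$g$ route is therefore a dead end rather than a mere obstacle, and the subsequent overlapping-intervals argument never gets started. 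A secondary issue: the paper assumes only $c\ge 0$, so the hypothesis $c>0$ a.e.\ is not available, and on $\{c=0\}$ you would have no control over $u_1-u_2$ even if the crucial step held.

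The paper does exactly what you gesture at in your last sentence. Assuming for contradiction that $a_1(\bar g)>a_2(\bar g)$ for some $\bar g$, it chooses a \emph{non-constant} Dirichlet datum equal to $\bar g$ on a small cap $\{|x-\bar x|<r\}\subset\partial\Omega$, transitioning to a nearby value $\underline g$ on $\{r<|x-\bar x|<s\}$, and equal to $\underline g$ outside. For the test function it uses the singular harmonic family $\lambda^\eps(x)=n(\bar x)\cdot\nabla\Phi_{y^\eps}(x)$ with pole $y^\eps=\bar x+\eps\,n(\bar x)$ approaching $\bar x$ from the exterior. Since $\lambda^\eps$ stays bounded in $L^1(\Omega)$, the volume term $\int_\Omega c(u_1-u_2)\lambda^\eps\d x$ in \eqref{eq:orth_nonlinear} remains bounded as $\eps\to 0$, while an explicit evaluation of $\int_{\partial\Omega}(A_1(g)-A_2(g))\,\partial_n\lambda^\eps\d s$, with the radii $r,s$ tied to $\eps$, shows that the boundary term diverges. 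This yields the contradiction directly, with no need to establish $c(u_1-u_2)=0$ and no positivity assumption on $c$.
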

\begin{proof}
We only consider the three dimensional case and assume, for simplicity, that the boundary $\partial \Omega$ of the domain is 
flat near some point $\bar x \in \partial \Omega$. 
Suppose there exists $\bar{g}\in \RR$ with $a_1(\bar g) - a_2(\bar g) > 0$. 
Then by continuity, $a_1(u) < a_2(u)$ for $u \in [\underline g, \bar g]$ with $\underline g < \bar g$.
Let us define the boundary datum $g$ by 
$$
g(x) = \left\{\begin{array}{ll} 
\bar g, & |x-\bar x| \le r, \\ 
\frac{|x-\bar x|-r}{s-r} \underline g  + \frac{s-|x-\bar x|}{s-r} \bar g, & r < |x-\bar x|< s, \\ 
\underline g, & |x-\bar x| \ge s,
\end{array}\right.
$$
where $0<r<s$ are sufficiently small and will be specified below.
For $\eps>0$ define $\lambda^\eps(x) = n(\bar x) \cdot \nabla \Phi_{y^\eps}(x)$ with 
$y^\eps=\bar x + \eps n(\bar x)$ and $\Phi_y(x) = 1/|x-y|$ as in the proof of Theorem~\ref{thm:identify_a0}.
Observe that $\lambda^\eps$ is harmonic in $\Omega$ and uniformly bounded in $L^1(\Omega)$ for all $\eps \ge 0$.
Now from \eqref{eq:orth_nonlinear} and $\Lambda_{a_1,c}=\Lambda_{a_2,c}$, we obtain 
\begin{align} 
 -\int_\Omega c(u_1-u_2)\lambda^\eps \d x 
&= \int_{\partial\Omega} \big(A_1(g) - A_2(g) \big) \partial_n \lambda^\eps \d s \label{eq:contradiction}\\
&= \big(A_1(\underline{g}) - A_2(\underline{g})\big) \int_{\partial \Omega} \partial_n \lambda^\eps \d s + \int_{\partial \Omega} B(g) \partial_n \lambda^\eps \d s. \nonumber
\end{align}
Since $\lambda^\eps$ is harmonic in $\Omega$, the first integral on the right hand side vanishes
and in the second term we abbreviated 
\begin{align*}
B(u) 
&=  \big(A_1(u) -A_1(\underline g)\big) - (A_2(u) - A_2(\underline g)\big) 
= \int_{\underline g}^{u} a_1(u) - a_2(u) \d u.
\end{align*}
Since $a_1(u) - a_2(u)>0$, the function $B(u)$ is strictly monotonically increasing and positive on $(\underline g,\bar g]$.
The second term can then be further evaluated by 
\begin{align*}
 \int_{\partial\Omega} B(g) \partial_n \lambda^\eps \d s 
&= B(\bar g)\int_{|x-\bar x| < r}  \partial_n \lambda^\eps \d s + \int_{r < |x-\bar x| < s} B(g) \partial_n \lambda^\eps \d s \\
&= -B(\bar g) \frac{r^2}{(r^2+\eps^2)^{3/2}} + \tilde{B} \left( \frac{r^2}{(r^2+\eps^2)^{3/2}} - \frac{s^2}{(s^2+\eps^2)^{3/2}} \right),
\end{align*}
where integration is performed over subsets of the boundary and $\tilde B \in [0,B(\bar g)]$. 
This formula holds for all $0<r<s$ sufficiently small and all $\eps > 0$. 
By choosing $r=\eps$ and $s = \eps + \eps^3$ and letting $\eps \to 0$, 
the first integral can be made arbitrarily large while the second integral can be made arbitrarily small. 
Since the left hand side of \eqref{eq:contradiction} is uniformly bounded as $\eps \to 0$, we obtain a contradiction
to the assumption that $a_1(\bar g) - a_2(\bar g)>0$. 
The two dimensional case and curved boundaries can be treated with similar arguments.
\end{proof}

\begin{remark}  \rm
Similar orthogonality relations and adjoint problems have been used for one-dimensional equations before. 
In \cite{DuChateau04}, the identifiability of $a$ is established by  controlling the sign of $u_{1x}$ and $\lambda_x$, which is possible with monotonicity arguments in the one-dimensional case. This argument is however not applicable in the multi-dimensional case. 
\end{remark}

\noindent
Summarizing the previous results, we obtain the \\
{\bf Proof of Theorem \ref{thm:main}:} 
If $\Lambda_{a_1,c_1}=\Lambda_{a_2,c_2}$, then Lemma \ref{lem:linearize} implies that $\Lambda^*_{a_1(0),c_1}=\Lambda^*_{a_2(0),c_2}$. Thus $a_1(0)=a_2(0)$ by Theorem~\ref{thm:identify_a0} and $c_1=c_2$ by Theorem \ref{thm:identify_a0_c}. 
The assertion  $a_1(u)=a_2(u)$ follows from Theorem \ref{thm:identify_a}, which concludes the proof.
\qed

\section{Discussion}


Concerning stability when reconstructing $c$ the best one can expect is an estimate of logarithmic type even if we assume that the coefficient $a$ is known and constant; see \cite{Alessandrini90} for details. Thus, the inverse problem considered in this paper is severely ill-posed.

\section*{Acknowledgments}
HE acknowledges support by DFG via Grant IRTG 1529 and GSC 233.
The work of JFP was supported by DFG via Grant 1073/1-1 and from the Daimler and Benz Stiftung via Post-Doc Stipend 32-09/12. 
We would like to thank Prof. Bastian von Harrach for valuable comments when completing the proof of Theorem~\ref{thm:identify_a}.

\begin{thebibliography}{10}

\bibitem{Alessandrini90}
G.~Alessandrini.
\newblock Singular solutions of elliptic equations and the determination of
  conductivity by boundary measurements.
\newblock {\em Journal of Differential Equations}, 84:252--272, 1990.

\bibitem{ArrLio98}
S.~R. Arridge and W.~R.~B. Lionheart.
\newblock Nonuniqueness in diffusion-based optical tomography.
\newblock {\em Optics Letters}, 23(11):882--884, 1998.

\bibitem{Astala06}
K.~Astala and L.~P{\"a}iv{\"a}rinta.
\newblock Calder\'on's inverse conductivity problem in the plane.
\newblock {\em Annals of Mathematics}, 163(1):265--299, 2006.

\bibitem{Bukhgeim2008}
A.~L.~Bukhgeim.
\newblock Recovering a potential from {C}auchy data in the two-dimensional case.
\newblock{\em Journal of Inverse and Ill-Posed Problems}, 16:19--33, 2008.

\bibitem{BughkeimKlibanov81}
A.~L.~Bukhgeim and M.~V.~Klibanov.
\newblock Uniqueness in the large of a class of multidimensional inverse problems.
\newblock {\em Soviet Math. Doklady}, 17:244--247, 1981.

\bibitem{Calderon80}
A.-P. Calder{\'o}n.
\newblock On an inverse boundary value problem.
\newblock In {\em Seminar on {N}umerical {A}nalysis and its {A}pplications to
  {C}ontinuum {P}hysics}, pages 65--73. Soc. Brasil. Mat., Rio de Janeiro,
  1980.

\bibitem{CannonYin89}
J.~R. Cannon and H.~Yin.
\newblock A class of non-linear non-classical parabolic equations.
\newblock {\em Journal of Differential Equations}, 79:266--288, 1989.

\bibitem{Druskin82}
V.~Druskin.
\newblock The unique solution of the inverse problem of electrical surveying
  and electrical well-logging for piecewise-continuous conductivity.
\newblock {\em Earth Physics}, 18(1), 1982.

\bibitem{DuChateauRundell85}
P.~DuChateau and W.~Rundell.
\newblock Unicity in an inverse problem for an unknown reaction term in a
  reaction-diffusion equation.
\newblock {\em Journal of Differential Equations}, 59:155--164, 1985.

\bibitem{DuChateau04}
P.~DuChateau, R.~Thelwell, and G.~Butters.
\newblock Analysis of an adjoint problem approach to the identification of an
  unknown diffusion coefficient.
\newblock {\em Inverse Problems}, 20:601--625, 2004.

\bibitem{GT}
D.~Gilbarg and N.~S. Trudinger.
\newblock {\em Elliptic Partial Differential Equations of Second Order},
\newblock Springer, Berlin, 2001.

\bibitem{Grinberg00}
N.~I. Grinberg.
\newblock Local uniqueness for the inverse boundary problem for the
  two-dimensional diffusion equation.
\newblock {\em European Journal of Applied Mathematics}, 11(05):473--489, 2000.


\bibitem{Harrach09}
B.~Harrach.
\newblock On uniqueness in diffuse optical tomography.
\newblock {\em Inverse Problems}, 25(5):055010 (14pp), 2009.

\bibitem{ImanuvilovUhlmannYamamoto10}
O.~Y. Imanuvilov, G.~Uhlmann, and M.~Yamamoto.
\newblock The {C}alder\'on problem with partial data in two dimensions.
\newblock {\em Journal of the American Mathematical Society}, 23:655--691,
  2010.

\bibitem{ImanuvilovUhlmannYamamoto11}
O.~Y. Imanuvilov, G.~Uhlmann, and M.~Yamamoto.
\newblock Determination of second-order elliptic operators in two dimensions
  from partial cauchy data.
\newblock {\em Proceedings of the National Academy of Sciences}, 108:467--472,
  2011.

\bibitem{ImanuvilovYamamoto2012}
O.~Y. Imanuvilov and M.~Yamamoto.
\newblock Inverse boundary value problem for Schr\"odinger equation in two dimensions.
\newblock {\em {SIAM} Journal of Mathematical Analysis}, 44(3):1333--1339, 2012.

\bibitem{Isakov89}
V.~Isakov.
\newblock Uniqueness for inverse parabolic problems with a lateral
  overdetermination.
\newblock {\em Communications in Partial Differential Equations}, 14:687--689,
  1989.

\bibitem{Isakov93}
V.~Isakov.
\newblock On uniqueness in inverse problems for semilinear parabolic equations.
\newblock {\em Archive for Rational Mechanics and Analysis}, 124:1--12, 1993.

\bibitem{Isakov01}
V.~Isakov.
\newblock Uniqueness of recovery of some quasilinear partial differential
  equations.
\newblock {\em Communications in Partial Differential Equations},
  26:1947--1973, 2001.

\bibitem{Isakov06}
V.~Isakov.
\newblock {\em Inverse Problems for Partial Differential Equations},
\newblock Springer Science+Business Media, 2006.

\bibitem{IsakovSylvester94}
V.~Isakov and J.~Sylvester.
\newblock Global uniqueness for a semilinear elliptic inverse problem.
\newblock {\em Communications on Pure and Applied Mathematics}, 47:1403--1410,
  1994.

\bibitem{KangNakamura02}
H.~Kang and G.~Nakamura.
\newblock Identification of nonlinearity in conductivity equation via
  {D}irichlet-to-{N}eumann map.
\newblock {\em Inverse Problems}, 18:1079--1088, 2002.

\bibitem{Klibanov04}
M.~V.~Klibanov.
\newblock Global uniqueness of a multidimensional inverse problem for a nonlinear parabolic equation.
\newblock {\em Inverse Problems}, 22:495--514, 2004.

\bibitem{Klibanov13}
M.~V.~Klibanov.
\newblock Carleman estimates for global uniqueness, stability and numerical methods for coefficient inverse problems.
\newblock {\em Journal of Inverse and Ill-Posed Problems}, 21:477--560, 2013.

\bibitem{KlibanovTimonov04}
M.~V. Klibanov and A.~Timonov.
\newblock {\em Carleman estimates for coefficient inverse problems and
  numerical applications}.
\newblock Inverse and Ill-posed Problems Series. VSP, Utrecht, 2004.

\bibitem{KohnVogelius84}
R.~Kohn and M.~Vogelius.
\newblock Determining conductivity by boundary measurements.
\newblock {\em Communications on Pure and Applied Mathematics}, 37:289--298,
  1984.

\bibitem{Kohn85}
R.~V. Kohn and M.~Vogelius.
\newblock Determining conductivity by boundary measurements {II}. {I}nterior
  results.
\newblock {\em Communications on Pure and Applied Mathematics}, 38(5):643--667,
  1985.

\bibitem{McLean}
W.~McLean.
\newblock {\em Strongly Elliptic Systems and Boundary Integral Equations}.
\newblock Cambridge University Press, 2000.

\bibitem{Nachmann96}
A.~Nachmann.
\newblock Global uniquenesss for a two-dimensional inverse boundary value
  problem.
\newblock {\em Annals of Mathematics}, 143(1):71--96, 1996.

\bibitem{NakamuraSun94}
G.~Nakamura and Z.~Sun.
\newblock An inverse boundary value problem for {S}t. {V}enant-{K}irchhoff
  materials.
\newblock {\em Inverse Problems}, 10:1159--1163, 1994.


\bibitem{PilantRundell90}
M.~Pilant and W.~Rundell.
\newblock Recovery of an unknown specific heat by means of overposed data.
\newblock {\em Numerical Methods for Partial Differential Equations}, 6:1--16,
  1990.

\bibitem{Sun96}
Z.~Sun.
\newblock On a quasilinear inverse boundary value problem.
\newblock {\em Mathematische Zeitschrift}, 221:293--305, 1996.

\bibitem{SunUhlmann97}
Z.~Sun and G.~Uhlmann.
\newblock Inverse problems in quasilinear anisotropic media.
\newblock {\em American Journal of Mathematics}, 119:771--797, 1997.

\bibitem{SylvesterUhlmann87}
J.~Sylvester and G.~Uhlmann.
\newblock Global uniqueness theorem for an inverse boundary problem.
\newblock {\em Annals of Mathematics}, 125:153--169, 1987.

\bibitem{Uhlmann2009}
G.~Uhlmann.
\newblock Electrical impedance tomography and {C}alder{\'o}n's problem.
\newblock {\em Inverse Problems}, 25(12):123011, 2009.

\end{thebibliography}

\end{document}